\theoremstyle{plain}
 \newtheorem{thm}{Theorem}[section]
 \newtheorem{prop}{Proposition}[section]
 \newtheorem{lem}{Lemma}[section]
\theoremstyle{definition}
\theoremstyle{remark}
 \newtheorem{rem}{Remark}[section]
 \numberwithin{equation}{section}
\renewcommand{\leq}{\leqslant}\renewcommand{\geq}{\geqslant}
\title[Evaluating the sum of positive series]{Evaluating the sum of convergent positive series}
\subjclass[2010]{Primary 65D15; Secondary 40A05; 40A30}
\keywords{Kummer's test; numerical algorithms; positive series; Tauberian theorems; Tong's theorem}
\author[Abramov]{\bfseries Vyacheslav M. Abramov} %% Please write ful names, avoid initials
\address{ %% Put here your affiliation; street address is not required
24 Sagan Drive, \\ % \hfill (Received 00 00 202?)\\
Cranbourne North   \\ %\hfill (Revised  00 00 202?)\\
Victoria-3977\\
Australia}
\email{vabramov126@gmail.com}
\thanks{Communicated by ...} %% This will be filled in the journal office.
\begin{document}

{\begin{flushleft}\baselineskip9pt\scriptsize
%PUBLICATIONS DE L'INSTITUT MATH\'EMATIQUE\\
%Nouvelle s\'erie, tome 1??(1??) (202?), od--do \hfill DOI: \\
MANUSCRIPT
\end{flushleft}}
\vspace{18mm} \setcounter{page}{1} \thispagestyle{empty}

\begin{abstract}
We provide numerical procedures for possibly best evaluating the sum of positive series under quite general setting. Our procedures are based on the application of a generalized version of Kummer's test.
\end{abstract}

\maketitle

\section{Introduction}  %% Please avoid complex formulas in (sub)titles
\subsection{Formulation of the problem and literature review}
Let
\begin{equation}\label{0}
\sum_{n=n_0}^{\infty}a_n=s
\end{equation}
be a convergent series with positive terms $a_n$, where $n_0$ is some integer that initiates the series.
The aim of this paper is to provide effective numerical procedures for evaluating $s$. This problem is old and important, and there is a number of known considerations in the literature.
A well motivation of this problem can be found in Boas \cite{B}. On page 237, Boas \cite{B} writes: ``Textbooks spend a lot of time on tests for convergence that are of little practical value, since a convergent series either converges rapidly, in which case almost all test will do; or it converges slowly, in which case it is not going to be of much use unless there is some way to get at its sum without adding up unreasonable number of terms." Then on page 238, he writes: ``It is different, but related, and somewhat more difficult, problem to calculate the sum of series when it would take an unreasonable or impossible number of terms to get it to a desired degree of accuracy. For example, $\sum_{n=2}^{\infty}n^{-1}(\log n)^{-2}$ would require about $10^{87}$ terms (the exact number is given below, on p. 240) to get its sum to 2 decimal places, but the sum is known, by indirect methods, to be approximately $2.10974$." In \cite[page 242]{B} Boas provides a table containing the information about convergence of different series including $\sum_{n=2}^{\infty}n^{-1}(\log n)^{-2}$, $\sum_{n=3}^{\infty}n^{-1}(\log n)^{-1}(\log\log n)^{-2}$ and many other interesting series.

There are different ways of approaching this problem in the literature. Let $f(x)$ be a continuous, positive and decreasing function, and $f(n)=a_n$.
Denote $R_n=\sum_{k=n}^{\infty}a_n$. Then $\int_{n+1}^{\infty}f(x)\leq R_n\leq \int_{n}^{\infty}f(x)$.
Morley \cite{M} showed that if $f(x)$ is also convex, then
\[
\int_{n}^{\infty}f(x)\mathrm{d}x-\frac{1}{2}f(n)\leq R_n\leq \int_{n}^{\infty}f(x)\mathrm{d}x-\frac{1}{2}f(n+1).
\]
Under the same assumption, this result was further sharpened by Nelsen \cite{N} to the following estimate
\[
\frac{1}{2}f(n+1)+\int_{n+1}^{\infty}f(x)\leq R_n\leq \int_{n+1/2}^{\infty}f(x)\mathrm{d}x.
\]
If $f(x)$ is smooth,
Boas \cite[pages 238, 239]{B} derives the simple inequality that follows directly from Euler-Maclaurin formula and second mean-value theorem:
\[
\int_{n+1/2}^{\infty}f(x)\mathrm{d}x+\frac{1}{8}f^\prime\left(n+\frac{1}{2}\right)<R_n<\int_{n+1/2}^{\infty}f(x)\mathrm{d}x,
\]
supporting it with many examples. Braden \cite{Br} has built error bounding pair for a series considering three different tests such as integral test, limit comparison test and ratio test. That error bounding pair enables us to evaluate the total number of terms in the partial sum of the series in order to reach the necessary accuracy.

The solution of ten challenging problems of numerical analysis that include series summation as a part has been provided in \cite{BLWW}. The approach in that book concerns all major techniques of modern numerical analysis that includes matrix computation, iterative linear methods, limit extrapolation and convergence acceleration, numerical quadrature, contour integration, discretization of PDEs, global optimization, Monte Carlo and evolutionary algorithms, error control, interval and high-precision arithmetic, and many more. The problems related to series summation can be found in \cite[Appendix A]{BLWW} titled \textit{Convergence Acceleration} and in \cite[Section 3]{BLWW} titled \textit{How Far Away Is Infinity?}  Specifically in \cite[Section 3]{BLWW}, the author of the section, J\"{o}rg Waldvogel, finds the $\ell^2$-norm of the infinite matrix $A$, the entries of which are $a_{1,1} = 1$, $a_{1,2} = 1/2$, $a_{2,1} = 1/3$, $a_{1,3} = 1/4$, $a_{2,2} = 1/5$, $a_{3,1} = 1/6$, and so on. The suggested methods include the analytic transform of the function of complex variable and its contour integration in order to find the required limit of the partial sums sequence for the series arising there. %The solution of the given problem can be developed for many slowly varying convergent series.

Note that the appearance of \cite{BLWW} was an accepted challenge on the announcement of L. N. Trefethen in SIAM News \cite{Tr}, who formulated ten easy-to-state but hard-to-solve problems on numerical analysis (see also the reviews of D. H. Bailey \cite{Bai} and J. M. Borwein \cite{Bor} for the additional comments). So, \cite{BLWW} is a problem-oriented book, the methods of which can be extended to wider classes of problems keeping the accuracy and computational speed (see \cite{H}). Concerning the convergent positive series, this means that the methods of \cite{BLWW} are applicable for some classes of slowly convergent series, and the limits of partial sums of those series can also be found for them.

During last years, new methods of summation of slowly convergent series have been developed in a number of papers of Milovanovi\'{c} \cite{Milov0, Milov1, Milov2, Milov3, Milov4, Milov5} and Gautschi and Milovanovi\'{c} \cite{GM}. In particular, in most recent paper \cite{Milov5} Milovanovi\'{c} used so-called summation/integration method based on transformation of the series to weighted integrals and construction of the quadrature formulas of Gaussian type for those integrals with respect to the weight functions contained in the construction of the weighted integrals. The mentioned paper \cite{Milov5} also provides a review of the known methods for summation of slowly convergent series developed at the last time.

\subsection{Motivation}
The methods suggested in \cite{B, Br, M, N}, cannot be successful in many cases, when the analytical derivation of $\int_n^\infty f(x)\mathrm{d}x$ is hard or impossible. Even in the cases when the analytical derivation of $\int_n^\infty f(x)\mathrm{d}x$ is possible but has a complex expression, the problem of finding $n$ in order to reach the necessary accuracy can be very challenging (e.g. see the discussion in \cite[page 240]{B}). As well, the methods of \cite{BLWW} or \cite{Milov5}  can be successful in a limited number of cases, for which it is possible to evaluate the limit of the sequence of partial sums of a series by using the special transforms and analytic techniques of complex analysis developed there.

In the present paper, we estimate the sum $s$ for quite general classes of convergent series. The terms $a_n$ in \eqref{0} may have a very complex form that will make impossible to use any analytic transform available in \cite{BLWW} or \cite{Milov5} to find the limit. For instance, $a_n$ can be derived from an inhomogeneous infinite system of functional or differential equations that often appears in applied areas of probability and mathematical analysis.
The method of the present paper works in quite general situation. The only general information about qualitative properties of $a_n$, such as the sequence $\{a_{n+1}/a_{n}\}$ is strictly increasing, is known. Thus, the present paper suggests a new tool for evaluating positive series with practically required accuracy, in which the explicit formula for $a_n$ is assumed to be unknown.

\subsection{Types of basic numerical procedures and approach}

We suggest two numerical procedures for possibly best evaluating $s$. One of them is called \textit{test procedure}. It enables us to check whether the remainder of the series is smaller than given $\epsilon$. Another procedure that is called \textit{search procedure} evaluates the sum of series or its remainder.
The search procedure is based on a search method that includes test procedures at each step of the search.
We shall consider two search procedures. One of them, \textit{step-forward search}, is based on the sequential test procedures consequently evaluating the remainders of the series as long as the required accuracy is not reached. The second one, \textit{modified step-forward search}, is an improved version of step-forward search with better performance that enables us to reach good accuracy for reasonable time. It turns out that modified step-forward search solves the required problem with relatively small number of iterations justified by the numerical study of the series given in the paper.

The approach of the present paper is based on the modified version of Kummer's test given by Tong \cite{T}. We generate the test's auxiliary sequence, and on the basis of the properties of that sequence we are able to arrive at the conclusion about the required accuracy for the estimate of the series sum.

Kummer's test in its original version appeared in 1835 in \cite{K}. Since its first publication it has been revised many times, and
after more than fifty years since then Stoltz provided the clearer formulation and proof that has been well-accepted and appeared in the textbooks (see e.g. \cite[page 311]{Kn}) and well-known in our days. About 30 years ago, Tong \cite{T} proved a new version of the test that characterizes convergence or divergence of any positive series in the forms of necessary and sufficient conditions.

Although Kummer's test is a more general test than many existing particular tests such as d'Alembert test, Raabe's test, Bertrand's test and Gauss's test, it is seldom applied in practical and theoretical problems, since it required an elegant construction of an auxiliary sequence. The known applications were given for new particular tests (e.g. \cite{A}) and in the theory of difference equations (e. g. \cite{GH}). Connection of Kummer's test with regular variation is given in \cite{Rehak2}. The applications of Tong's theorem \cite{T} hitherto are unknown, and this paper presents the first one.

\subsection{Comments on the numerical study} The numerical examples of this paper are relatively simple and have only an illustrative nature.
They do not pretend to be challenging problems that cannot be solved by other known methods, but enable a reader easily understand the procedures and reproduce the computations using MATLAB or another tool. Relatively simple series for illustration purposes are often used. For instance, the slowly convergent series $\sum_{n=2}^\infty n^{-1}(\log n)^{-2}$, the sum of which is known (e. g. \cite{B}) is used in a number of papers (e. g. \cite{Br}, \cite{C}) for illustration of the methods suggested there.

 We shall study numerically the following two series. The first series, $I_1=\sum_{n=1}^{\infty} n^{-3/2}\log(n+1)$, is a series with relatively slow convergence, and the second one, $I_2=\sum_{n=1}^{\infty} n^{-7/4}\log(n+1)$, is a more regularly convergent one. Both of these series can be numerically studied by the known methods proposed in \cite{BLWW},  \cite{Br} or \cite{Milov5}. For instance, with the aid of the method of \cite[Section 3]{BLWW}, it is possible to find the limit of the partial sums of $I_1$ and obtain $I_1= 4.91715 77360 18209$ (with the accuracy of fifteen digits). By the method presented in \cite[Theorem 3.2]{Milov5} with $m=10$ and $n=80$ (the notation is taken from \cite{Milov5}; $n=80$ denotes the number of nodes in the quadrature formula) we have approximately $I_1=4.91715773601820873704547032417452640168842246152187424222353$.  
 
 The method of \cite{Br} also enables us to find the required bounds for $I_1$ and $I_2$ in order to judge about the possible number of terms for the required accuracy.

 With the algorithms of the present paper, $I_1$ is calculated with the accuracy of two digits, and $I_2$ with the accuracy of four digits.

\subsection{Outline of the paper}
The rest of the paper is organized as follows.  In Section \ref{S2}, we recall the formulation of Tong's theorem \cite{T} in the form adapted to the required numerical procedures and provide its new short proof containing the important expression that is then used in the paper. The proof is based on application of Abelian and Tauberian theorems.
In Section \ref{S3}, we explain the test procedure and justify its effectiveness on examples. In Section \ref{S4}, we explain the search procedures on two numerical examples.
In Section \ref{S5}, we conclude the paper.

\section{Tong's theorem}\label{S2} In this section, we formulate and prove the only first claim of Tong's theorem related to the convergence of the series. The second claim related to divergence is not required for our further construction.
For convenience, in the formulation and proof of the theorem, the value $n_0$ in \eqref{0} is set to $0$.

\begin{thm}\label{thm1}
Series $\sum_{n=0}^{\infty}a_n$ converges if and only if there exists a positive sequence $\zeta_n$, $n=0,1,\dots$, such that $\zeta_na_n/a_{n+1}-\zeta_{n+1}=1$.
\end{thm}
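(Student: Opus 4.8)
The plan is to prove both directions fairly directly by constructing or exploiting the auxiliary sequence $\zeta_n$ in closed form. For the "if" direction, suppose such a positive sequence $\zeta_n$ exists with $\zeta_n a_n/a_{n+1} - \zeta_{n+1} = 1$. I would multiply through by $a_{n+1}$ to get the Kummer-type identity $\zeta_n a_n - \zeta_{n+1}a_{n+1} = a_{n+1}$. Summing this telescoping relation from $n=0$ to $N-1$ gives $\zeta_0 a_0 - \zeta_N a_N = \sum_{n=1}^{N} a_n$. Since $\zeta_N a_N > 0$, the partial sums are bounded above by $\zeta_0 a_0$, so the positive series converges. (As a bonus, this identity shows $R_{N+1} = \sum_{n=N+1}^\infty a_n \le \zeta_N a_N$, the expression the paper says it will reuse — in fact one should get $R_{N+1} = \zeta_N a_N - \lim_k \zeta_k a_k$.)

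For the "only if" direction, assume $\sum a_n = s$ converges and exhibit an explicit $\zeta_n$. The telescoping computation above tells us what $\zeta_n$ must be: from $\zeta_n a_n - \zeta_{n+1} a_{n+1} = a_{n+1}$ we expect $\zeta_n a_n$ to equal the tail $R_{n+1} = \sum_{k=n+1}^\infty a_k$ plus possibly a constant multiple of something vanishing; the cleanest choice is $\zeta_n = R_{n+1}/a_n = a_n^{-1}\sum_{k=n+1}^\infty a_k$. I would then verify directly that this $\zeta_n$ is positive (it is, since all $a_k>0$ and the tail is a convergent positive series, hence strictly positive) and satisfies the required recurrence: compute
\[
\zeta_n \frac{a_n}{a_{n+1}} - \zeta_{n+1} = \frac{1}{a_{n+1}}\sum_{k=n+1}^\infty a_k - \frac{1}{a_{n+1}}\sum_{k=n+2}^\infty a_k = \frac{a_{n+1}}{a_{n+1}} = 1,
\]
which closes the argument.

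The only genuine subtlety — and the step I would be most careful about — is that positivity of $\zeta_n = a_n^{-1}\sum_{k=n+1}^\infty a_k$ requires the tail sum to be strictly positive, which holds precisely because the $a_k$ are strictly positive; one should note that the theorem's hypothesis is "positive series," so $a_n > 0$ for all $n$, and hence every tail $R_{n+1}>0$ and every $\zeta_n$ is well-defined and positive. No convexity, monotonicity, or smoothness of the terms is needed. I would present the "only if" direction first (it contains the formula $\zeta_n a_n = R_{n+1}$ that the introduction flags as central to the numerical procedures), then the "if" direction as the short telescoping estimate above, and finally remark that combining the two gives, for any admissible $\zeta_n$, the sandwich $R_{n+1} \le \zeta_n a_n$ with equality for the canonical choice.
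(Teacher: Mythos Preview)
Your proof is correct and considerably more direct than the paper's. The paper takes a generating-function route: it encodes the recurrence $a_n\zeta_n - a_{n+1}\zeta_{n+1} = a_{n+1}$ as the power-series identity $a_0\zeta_0 - A(x) = (1-x)Z(x)$, where $A(x)=\sum_{n\ge1}a_nx^n$ and $Z(x)=\sum_{n\ge0}a_n\zeta_nx^n$, and then appeals to Abel's theorem and a Hardy--Littlewood Tauberian theorem to pass between convergence of $\sum a_n$ and existence of a positive sequence $\zeta_n$. Your telescoping argument for the ``if'' direction and the explicit choice $\zeta_n=a_n^{-1}\sum_{k>n}a_k$ for the ``only if'' direction accomplish the same thing with no analytic machinery; indeed the telescoped identity $\sum_{k=1}^n a_k = a_0\zeta_0 - a_n\zeta_n$ that you derive is exactly the paper's equation~(2), which the paper obtains only afterwards (in the proof of Proposition~2.3) and flags as the formula driving the numerical procedures. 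The paper's approach buys a one-parameter family of solutions $\zeta_n = a_n^{-1}(c + R_{n+1})$ with $c>0$ (your canonical choice is the boundary case $c=0$) and a link to Tauberian theory; your approach buys transparency and sidesteps the somewhat delicate point that the Tauberian lemma's nonnegativity hypothesis on the coefficients $a_n\zeta_n$ must hold \emph{before} positivity of $\zeta_n$ has been established.
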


\begin{rem}
The formulation of Theorem \ref{thm1} is simpler than that in \cite{T}, where the sequence $\zeta_n$ was assumed to satisfy $\zeta_na_n/a_{n+1}-\zeta_{n+1}\geq c>0$.
\end{rem}

\begin{proof}
The elementary proof given here involves a well-known Abel theorem, its inversion for positive series as well as a Hardy-Littlewood Tauberian theorem \cite{Hardy, HL}. Below we recall the formulation of that Tauberian theorem.

\begin{lem}\label{lem1} Let the series
$
\sum_{j=0}^{\infty}a_jx^j
$
converge for $|x|<1$, and suppose that there exists $\gamma\geq0$ such that
$
\lim_{x\uparrow1}(1-x)^\gamma\sum_{n=0}^{\infty}a_jx^j=A.
$
Suppose also that $a_j\geq0$. Then, as $N\to\infty$, we have
$
\sum_{j=0}^Na_j=({A}/{\Gamma(1+\gamma)})N^\gamma(1+o(1)),
$
where $\Gamma(x)$ is Euler's Gamma-function.
\end{lem}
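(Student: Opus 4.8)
\textbf{Proof proposal for Lemma \ref{lem1}.}

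The plan is to prove this Hardy--Littlewood Tauberian theorem by the standard moment-method argument of Karamata, since the paper offers this as a recalled classical result rather than as its own contribution. First I would reduce to proving a statement about the weighted sums $\sum_{j=0}^{N} a_j g(j/N)$ for a suitable class of test functions $g$. Writing $A_N = \sum_{j=0}^{N} a_j$ and $s(x) = \sum_{j=0}^{\infty} a_j x^j$, the hypothesis reads $(1-x)^\gamma s(x) \to A$ as $x \uparrow 1$. Setting $x = e^{-t}$ with $t \downarrow 0$ and using $1 - e^{-t} \sim t$, this is equivalent to $t^\gamma s(e^{-t}) \to A$, i.e.
\begin{equation}\label{eq:plan-hyp}
\sum_{j=0}^{\infty} a_j e^{-jt} \sim A\, t^{-\gamma}, \qquad t \downarrow 0.
\end{equation}
The goal \eqref{eq:plan-hyp-goal} is then the asymptotic
\begin{equation}\label{eq:plan-hyp-goal}
A_N = \sum_{j=0}^{N} a_j \sim \frac{A}{\Gamma(1+\gamma)}\, N^\gamma.
\end{equation}

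The heart of Karamata's method is to upgrade the single exponential weight $e^{-jt}$ in \eqref{eq:plan-hyp} to an arbitrary polynomial in $e^{-jt}$, and then by approximation to a general integrable test function. Concretely, I would first establish that for every polynomial $P$,
\[
t^{\gamma} \sum_{j=0}^{\infty} a_j e^{-jt} P\!\left(e^{-jt}\right) \To \frac{A}{\Gamma(\gamma)} \int_0^1 P(u)\, u^{\gamma-1}\,(-\log u)^{\gamma-1}\, \mathrm{d}u
\]
as $t \downarrow 0$, or rather its cleaner form obtained by the substitution that turns the exponential sums into the moments $\int_0^\infty e^{-(k+1)u} u^{\gamma-1}\mathrm{d}u = \Gamma(\gamma)(k+1)^{-\gamma}$. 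The mechanism is linearity: replacing $t$ by $(k+1)t$ in \eqref{eq:plan-hyp} shows the claim holds when $P(u) = u^k$ is a single monomial, and linear combinations extend it to all polynomials. The main obstacle, and the step I would spend the most care on, is the passage from polynomials to the discontinuous indicator function $g(u) = u^{-1}\mathbf{1}_{[e^{-1},1]}(u)$ (or equivalently $\mathbf{1}_{[0,1]}$ in the variable $jt$), which is what actually recovers the partial sum $A_N$. Since the measures $a_j e^{-jt}\,\delta_{jt}$ are nonnegative, one sandwiches the indicator between polynomials from above and below in the weighted $L^1$-sense and uses the Weierstrass approximation theorem; it is precisely here that the hypothesis $a_j \ge 0$ is indispensable, because the Tauberian conclusion is false without a one-sided condition, and positivity is what makes the sandwiching estimates legitimate.

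Finally I would collect the pieces: applying the extended convergence to the indicator test function and taking $t = 1/N$ converts the limit relation into
\[
N^{-\gamma} \sum_{j=0}^{N} a_j \To \frac{A}{\Gamma(\gamma)} \int_0^1 v^{\gamma-1}\,\mathrm{d}v = \frac{A}{\Gamma(\gamma)\,\gamma} = \frac{A}{\Gamma(1+\gamma)},
\]
using $\Gamma(1+\gamma) = \gamma\,\Gamma(\gamma)$, which is exactly \eqref{eq:plan-hyp-goal}. The degenerate case $\gamma = 0$ should be handled separately, since there the conclusion asserts merely $A_N \to A$ and the Gamma-factor is $\Gamma(1) = 1$; here the argument still goes through but the moment integrals must be interpreted in the limiting sense, and one may alternatively invoke Abel's theorem together with the one-sided Tauberian condition directly. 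I expect the polynomial-to-indicator approximation to be the only genuinely delicate point; everything else is bookkeeping with the substitution $x = e^{-t}$ and the elementary identity for $\Gamma$.
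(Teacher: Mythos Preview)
Your proposal is correct in outline: this is the standard Karamata argument for the Hardy--Littlewood Tauberian theorem, and the scaffolding you describe (substitution $x=e^{-t}$, extension from the single exponential to monomials by rescaling $t\mapsto(k+1)t$, polynomial approximation of the indicator using Weierstrass and the positivity of the $a_j$, and the final Gamma identity) is exactly how the classical proof runs. One small slip: in your displayed limit the integrand should be $P(u)(-\log u)^{\gamma-1}\,\mathrm{d}u$ rather than $P(u)\,u^{\gamma-1}(-\log u)^{\gamma-1}\,\mathrm{d}u$; the extra $u^{\gamma-1}$ does not belong, though you seem aware the formula is provisional and your final computation comes out right regardless.

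As for comparison with the paper: there is nothing to compare. The paper does not prove Lemma~\ref{lem1} at all; it merely recalls the statement with a citation to Hardy and Hardy--Littlewood and then uses it as a black box inside the proof of Theorem~\ref{thm1}. Your proposal therefore supplies strictly more than the paper does. If your aim is only to match the paper, a one-line reference to \cite{Hardy, HL} suffices; if you want a self-contained argument, what you have sketched is the right one, and only the polynomial-to-indicator sandwiching needs to be written out carefully.
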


For $|x|<1$ introduce generating functions. Denote $A(x)=\sum_{n=1}^{\infty}a_nx^n$ and $Z(x)=\sum_{n=0}^{\infty}a_n\zeta_nx^n$. We have
\begin{equation}\label{1}
a_0\zeta_0-A(x)=(1-x)Z(x).
\end{equation}
Now both necessary and sufficient conditions follow from \eqref{1}. If $\sum_{n=0}^{\infty}a_n$ converges, then according to Abel's theorem $\lim_{x\uparrow1}A(x)=s-a_0$, and
$\zeta_0$ can be chosen satisfying the condition $\zeta_0>(s-a_0)/a_0$. According to Lemma \ref{lem1}, for large $N$ we have $\sum_{n=0}^{N}a_n\zeta_n=\big((s-a_0)/a_0\big)N(1+o(1))$, and hence the required positive sequence $\zeta_n$ exists. On the other hand, the existence of a positive sequence $z_n=a_n\zeta_n$ satisfying $\lim_{x\uparrow1}(1-x)\sum_{n=0}^{\infty}a_n\zeta_nx^n=c>0$ implies that the left-hand side of \eqref{1} is positive and $a_0\zeta_0-\lim_{x\uparrow1}A(x)=c$, which means that $s=a_0+a_0\zeta_0-c<\infty$. Here we used the fact that if $\lim_{x\uparrow1}A(x)$ exists and $a_n\geq0$, then $\sum_{n=1}^{\infty}a_n=\lim_{x\uparrow1}A(x)$, that  in particular follows from Lemma \ref{lem1} for $\gamma=0$.
\end{proof}

The above proof of Theorem \ref{thm1} enables us to establish the following important property.

\begin{prop}\label{prop1}
Suppose that $a_0\zeta_0>\sum_{n=1}^{\infty}a_n$, and the sequence $b_n=a_{n+1}/a_n$, $n\geq0$, is strictly increasing. Then the sequence $\zeta_n$, $n\geq0$, strictly increases.
\end{prop}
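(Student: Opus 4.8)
The plan is to read off from the proof of Theorem~\ref{thm1} a closed form for $\zeta_n$ and then to display it as a sum of two sequences, each of which is visibly increasing. Iterating the defining relation in the form $a_{n+1}\zeta_{n+1}=a_n\zeta_n-a_{n+1}$ (equivalently, comparing the coefficient of $x^n$, $n\geq1$, on the two sides of \eqref{1}) and telescoping gives
\[
a_n\zeta_n=a_0\zeta_0-\sum_{k=1}^{n}a_k,\qquad n\geq0.
\]
Set $\delta=a_0\zeta_0-\sum_{k=1}^{\infty}a_k$, which is positive by hypothesis, and use $\sum_{k=1}^{n}a_k=\sum_{k=1}^{\infty}a_k-\sum_{k=n+1}^{\infty}a_k$ to rewrite this as
\[
\zeta_n=\frac{\delta}{a_n}+\frac{1}{a_n}\sum_{k=n+1}^{\infty}a_k=\frac{\delta}{a_n}+\sum_{m=0}^{\infty}\prod_{i=0}^{m}b_{n+i},
\]
the last equality using $a_{n+1+m}/a_n=\prod_{i=0}^{m}b_{n+i}$. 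Write $\sigma_n$ for the second summand, so $\zeta_n=\delta/a_n+\sigma_n$.

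The first step is to check $\sigma_{n+1}>\sigma_n$. Since $b_n$ is strictly increasing, a term-by-term comparison of $\sigma_{n+1}=\sum_{m\geq0}\prod_{i=0}^{m}b_{n+1+i}$ with $\sigma_n=\sum_{m\geq0}\prod_{i=0}^{m}b_{n+i}$ shows that each factor, hence each term, of the former strictly exceeds the corresponding one of the latter; all terms are positive and both series converge (because the original series does), so $\sigma_{n+1}>\sigma_n$. The second step is to check that $(a_n)$ is strictly decreasing, so that $\delta/a_n$ is strictly increasing. This is where convergence enters: since $a_n\to0$, no $b_n$ can be $\geq1$, for if $b_m\geq1$ then strict monotonicity gives $b_k>1$ for $k>m$, hence $a_{k+1}=b_ka_k\geq a_k$ for all $k\geq m$, so $(a_k)$ would be bounded below by $a_m>0$ --- impossible. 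Thus $b_n<1$, i.e.\ $a_{n+1}<a_n$, for every $n$.

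Adding the two monotonicities gives $\zeta_{n+1}=\delta/a_{n+1}+\sigma_{n+1}>\delta/a_n+\sigma_n=\zeta_n$, proving the proposition (indeed with strict increase). I do not expect a genuine obstacle; the one point needing a little care is the deduction that $(a_n)$ decreases, since this is not stated among the hypotheses and must be recovered from convergence together with the monotonicity of the ratios $b_n$. An equivalent route bypasses the splitting: the recurrence gives $\zeta_{n+1}-\zeta_n=\zeta_n(a_n-a_{n+1})/a_{n+1}-1$, so it suffices to verify $\zeta_n\geq a_{n+1}/(a_n-a_{n+1})$; since $a_n\zeta_n=a_0\zeta_0-\sum_{k=1}^{n}a_k\geq\sum_{k\geq n+1}a_k$ and, using $b_{n+i}>b_n$, $\sum_{k\geq n+1}a_k\geq a_{n+1}\sum_{j\geq0}(a_{n+1}/a_n)^{j}=a_na_{n+1}/(a_n-a_{n+1})$, this is immediate.
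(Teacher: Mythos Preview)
Your proof is correct and follows essentially the same route as the paper: telescope to the closed form $\zeta_n=\bigl(c+\sum_{k>n}a_k\bigr)/a_n$, then compare $\zeta_{n+1}$ with $\zeta_n$ by handling the $c/a_n$ piece via $a_{n+1}<a_n$ and the tail-sum piece term-by-term via the product representation $a_N/a_n=\prod b_i$. You are in fact more careful than the paper in deducing $a_{n+1}<a_n$ from convergence together with the monotonicity of $b_n$; the paper simply asserts this step.
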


\begin{proof}
Write $a_n\zeta_{n}-a_{n+1}\zeta_{n+1}=a_{n+1}$. Then,
\begin{equation}\label{2}
\sum_{k=1}^{n}a_k=a_0\zeta_0-a_n\zeta_{n}.
\end{equation}
If $a_0\zeta_0>\sum_{k=1}^{\infty}a_k$, then $c=a_0\zeta_0-\sum_{k=1}^{\infty}a_k>0$. From \eqref{2} we have

\begin{equation}\label{3}
\zeta_n=\frac{1}{a_n}\left(c+\sum_{k=1}^{\infty}a_k-\sum_{k=1}^{n}a_k\right)=\frac{1}{a_n}\left(c+\sum_{k=n+1}^{\infty}a_k\right).
\end{equation}
It follows from \eqref{3} that the sequence $\zeta_n$ is increasing. Indeed, we have
\[
\zeta_{n+1}=\frac{1}{a_{n+1}}\left(c+\sum_{k=n+2}^{\infty}a_k\right)>\frac{1}{a_{n}}\left(c+\sum_{k=n+1}^{\infty}a_k\right)=\zeta_n.
\]
The last inequality is true, since the made assumption implies that $a_{n+1}<a_n$ for all $n\geq0$ (if $a_{n+1}\geq a_{n}$ for a certain $n=n_0$, then the inequality must satisfy for all $n\geq n_0$, and we arrive at a divergent series),  and
\[
\frac{a_N}{a_n}=b_{N-1}b_{N-2}\cdot\ldots\cdot b_n<b_Nb_{N-1}\cdot\ldots\cdot b_{n+1}=\frac{a_{N+1}}{a_{n+1}}
\]
for any $N>n$.
\end{proof}

If $a_0\zeta_0<\sum_{k=1}^{\infty}a_k$, and the sequence $b_n$, $n\geq1$, is increasing, then it follows from \eqref{1} or \eqref{2} that there is the index value $n=n^*$ for which we have $\zeta_{n^{*}}\geq\zeta_{n^{*}-1}$, but $\zeta_{n^{*}+1}<\zeta_{n^{*}}$.

So, the idea of the search procedure is to find the value $\zeta_0$ such that $a_0\zeta_0$ would be close enough to $\sum_{k=1}^{\infty}a_k$.
The idea of the test procedure is to check whether the chosen value of $\zeta_0$ is given such that the sum of the series (or more often the remainder of the series) is less than given $\epsilon$.

\section{The test procedure}\label{S3}
Let \eqref{0} be a remainder of the series. The test procedure is aimed to answer the following question: whether $s-a_{n_0}<\epsilon$. So, setting $\zeta_{n_0}a_{n_0}=\epsilon$, we are to check whether the sequence $\zeta_n$, $n\geq n_0$, is increasing.

For the numerical illustration we consider the series $\sum_{n=1}^{\infty}n^{-3/2}\log(n+1)$. Some partial sums of this series, $S_n=\sum_{i=1}^{n}i^{-3/2}\log(i+1)$, are given in Table \ref{tab1}.
\begin{table}
    \begin{center}
            \caption{Some values of partial sums $S_n$ for the series $\sum_{n=1}^{\infty}n^{-3/2}\log(n+1)$.}\label{tab1}
        \begin{tabular}{|c||c|c|c|c|c||}\hline
$n$ & $5,000$  &  $10,000$ & $20,000$ & $50,000$ & $100,000$\\
\hline
$S_n$&$4.619697$   &  $4.692955$   & $4.748819$   & $4.802495$   & $4.831695$\\
       \hline
        \end{tabular}
    \end{center}
\end{table}
Note that for the aforementioned series, the monotonicity condition $b_n<b_{n+1}$, $n=1,2,\dots$, is satisfied.

 Using Theorem \ref{thm1} let us solve the following  problem. Take $n_0=10,001$ and $\epsilon=0.1$. Check whether $\sum_{n=10,001}^{\infty}n^{-3/2}\log(n+1)<0.1$.

To solve this problem, take $\zeta_{10,000}a_{10,000}=0.1$ and check whether the sequence $\zeta_{n}$, $n\geq10,000$, is increasing. In our analysis, we can check the values $\zeta_n$ for a fixed number of iterations only, say for $10,000\leq n\leq 59,999$.
Note, that because of this restriction, our analysis can wrongly show that all the obtained values $\zeta_n$ are indeed in the increasing order, while in fact the behaviour of $\zeta_n$ can be changed out of the horizon of $50,000$ iterations. In that case we may accept a wrong hypothesis and arrive at the mistaken result.

In our case the starting value is $\zeta_{10,000}=10,857.244172$. Then using the recurrence relation $\zeta_{n+1}=\zeta_na_n/a_{n+1}-1$ we find that  for $n=10,000, \dots, \ 17,804$ the sequence $\zeta_{n}$ follows in an increasing order, and then after $n=17,804$ it decreases. In Table \ref{tab0}, we provide some relevant values of $\zeta_n$ that indicate the behaviour of $\zeta_n$ prior the indicated number $n$. Thus, the solution to this problem yields the negative answer after less than $8,000$ steps of the recursion, that is many less than the maximum number of steps of the above convention.
\begin{table}
    \begin{center}
            \caption{Some values of $\zeta_n$ with the starting value $\zeta_{10,000}=10,857.244172$.}\label{tab0}
        \begin{tabular}{|c||c|c|c|c||}\hline
$n$  &  $17,802$ & $17,803$ & $17,804$ & $17,805$\\
\hline
$\zeta_n$  &  $12,736.509420$   & $12,736.509515$   & $12,736.509554$   & $12,736.509537$\\
       \hline
        \end{tabular}
    \end{center}
\end{table}
Following Table \ref{tab1}, $S_{10,000}=4.692955$ and $S_{10,000}+0.1=4.792955$. The last value is closer to $S_{50,000}$. More accurately, $S_{41,363}=4.792955$.

Let us now consider the same example with $\epsilon=0.15$. That is, we would like to check whether $\sum_{n=10,001}^{\infty}n^{-3/2}\log(n+1)<0.15$. In this case, $\zeta_{10,000}=16,285.866259$, and from the aforementioned recurrence relation we find that all the values $\zeta_n$, $10,000\leq n\leq 59,999$, follow an increasing order. In Table \ref{tab01}, we provide the four last values of $\zeta_n$. According to the numerical results obtained, we arrive at the positive answer to our hypothesis.
\begin{table}
    \begin{center}
            \caption{Some values of $\zeta_n$ with the starting value $\zeta_{10,000}=16,285.866259$.}\label{tab01}
        \begin{tabular}{|c||c|c|c|c||}\hline
$n$  &  $59,996$ & $59,997$ & $59,998$ & $59,999$\\
\hline
$\zeta_n$  &  $42,691.061392$   & $42,691.064068$   & $42,691.066728$   & $42,691.069372$\\
       \hline
        \end{tabular}
    \end{center}
\end{table}
Is the made conclusion correct? The obtained value is greater than that indicated in Table \ref{tab1} for $S_{100,000}$. So, we indeed can believe that our solution is true.

Let us now re-check whether our conclusion is true. Take $n=100,000$. Then from Table \ref{tab1} we have $\epsilon=4.842955-4.831695=0.011260$. The associated value of $\zeta_{100,000}$ is $\zeta_{100,000}=30,928.034437$. Now a new recalculation shows that our previous conclusion was wrong. Starting with $\zeta_{100,000}=30,928.034437$ one can observe that the sequence $\zeta_n$ is not increasing, just decreasing. Even the second value $\zeta_{100,001}=30,927.471495$ is less than the first (original) value $\zeta_{100,000}$. Summing up the series terms that are out of Table \ref{tab1}, we find $S_{139,230}=4.842955$. These two test calculations show a massive difference between the first test given for $n=10,000$ and $\epsilon=0.15$ and the second one given for $n=100,000$ and $\epsilon=0.011260$. In the first case the $50,000$ steps of iterations were insufficient to arrive at true conclusion, while in the second case the only single iteration provided a true conclusion. Indeed, in the second case the information on the partial sum $S_{100,000}$ is more complete about the series, and the smaller value of $\epsilon$ compared to its originally defined value enables us to provide a more exact verification of the test. The last conclusion follows directly from \eqref{2}. If $a_0\zeta_0<\sum_{n=1}^{\infty}a_n$, then there exists $n^*$ such that the partial sum $S_{n^*}>a_0\zeta_0$ and $\zeta_{n^*}$ must be negative. Prior becoming negative, the sequence $\zeta_n$ that starts from the positive $\zeta_0$ must decrease. So, if $n<n^*$ is close to $n^*$, then for the remainder of the series the sequence $\zeta_n$ will decrease,
and this effect has just been obtained numerically.

\section{The search procedure}\label{S4} The main idea of the search method is a sequential evaluation of the sum of series or its remainder. %The most elementary search method is step-forward search.

\subsection{Search algorithms}

\noindent
\textit{The algorithm of step-forward search}

\begin{enumerate}
\item [(i)] \textit{Initial step.} For some $N$ find the partial sum of the series $S_N=\sum_{n=n_0}^{N}a_n$.
\item [(ii)] \textit{Test search step}. For a given $\epsilon$ test whether the remainder of the series $\sum_{n=N+1}^{\infty}a_n$ is less than $\epsilon$.
\item [(iii)] If the answer in (ii) is negative, then find a new value of $S_{N^*}$, where
\[
N^*=\min\left\{m:\sum_{n=n_0}^{m}a_n\geq S_N+\epsilon\right\}=\min\{m: S_m\geq S_N+\epsilon\},
\]
set $S_N=S_{N^*}$ and repeat (i) and (ii).
\item [(iv)] If the answer in (ii) is positive, the procedure is terminated.
\end{enumerate}

Using this method assumes that the possible number of iterations at a test search step can be large. Then wrong decision at final step in the series of the test search steps can be made with negligibly small likelihood. The maximum number of iterations in a step is set to $10^9$.

\medskip
\noindent
\textit{The algorithm of modified step-forward search}

\begin{enumerate}
\item [(i)] \textit{Initial step.} For some $N$ find the partial sum of the series $S_N=\sum_{n=n_0}^{N}a_n$.
\item [(ii)] \textit{Test search step}. For a given $\epsilon$ test whether the remainder of the series $\sum_{n=N+1}^{\infty}a_n$ is less than $\epsilon$.
\item [(iii)] If the number of iterations is less than a specified value $M$ before the negative answer is obtained, then we find a new value of $S_{N^*}$, where
\[
N^*=\min\left\{m:\sum_{n=n_0}^{m}a_n\geq S_N+\epsilon\right\}=\min\{m: S_m\geq S_N+\epsilon\},
\]
and repeat (i) and (ii).
\item [(iv)] If the number of iteration reaches $M$, then the procedure of test search is interrupted.
\item [(v)] \textit{New test search step}. The test search is resumed with the new parameter $\epsilon^*=\epsilon/K$.
\item [(vi)] If the answer in (v) is negative, then find a new value of $S_{N^*}$, where
\[
N^*=\min\left\{m:\sum_{n=n_0}^{m}a_n\geq S_N+\epsilon^*\right\}=\min\{m: S_m\geq S_N+\epsilon^*\},
\]
set $S_N=S_{N^*}$ and repeat (v) and (vi).
\item [(vii)] If the answer in (v) is positive, the procedure is terminated.
\end{enumerate}

\noindent
\medskip
\textit{Remarks}
\begin{enumerate}
\item [1.] In general, $M\geq2$. In the numerical study in Section \ref{S4.3} we set $M=2$ that seems to be the best setting in the general situation.
\item [2.] The most convenient setting for $K$ is $K=10$.
\item [3.] The presented algorithm can be further modified. For instance, after step (v) we can check the number of iterations again similarly to that it is given in step (iii). If it is less than $M$, then we find $S_N$ as indicated in (vi). Otherwise the procedure is interrupted and then resumed with the new parameter $\epsilon^{**}=\epsilon^*/K$ and so on.
\item [4.] Following the above three remarks, the total number of iterations in order to reach the required accuracy of the series can be made relatively small.
\end{enumerate}

\subsection{Numerical study}\label{S4.3} For the numerical study we consider the same series
\begin{equation}\label{4}
\sum_{n=1}^{\infty} \frac{\log(n+1)}{n\sqrt{n}}
\end{equation}
that was considered in Section \ref{S3} as well as the series
\begin{equation}\label{5}
\sum_{n=1}^{\infty} \frac{\log(n+1)}{n\sqrt{n}\sqrt[4]{n}}
\end{equation}
that converges with the higher rate compared to the series given by \eqref{4} and hence can be provided with higher accuracy. For series \eqref{4} we provide our experiments with $\epsilon=0.01$ taking the initial partial sum $S_{100,000}=4.831695$ (see Table \ref{tab1}). For series \eqref{5} we use $\epsilon=0.0001$ starting with the initial partial sum $S_{1,000,000}=2.625626$.

\subsubsection{Step-forward search} With $\epsilon=0.01$ the step-by-step results for series \eqref{4} are given in Table \ref{tab2} and for series \eqref{5} in Table \ref{tab21}.
\begin{table}
    \begin{center}
            \caption{Numerical study of the series $\sum_{n=1}^{\infty} \log(n+1)n^{-3/2}$ by step-forward search with $\epsilon=0.01$}\label{tab2}
        \begin{tabular}{c|c|c|c}\hline
Step number  & Number of iterations & $n$       & $S_n$\\
                & in the step          &           &      \\
\hline\hline
 0              & N/A                 & $100,000$ &$4.831695$\\
 1              & $1$                  & $133,854$ &$4.841695$\\
 2              & $1$                  & $186,526$ &$4.851695$\\
 3              & $1$                  & $274,211$ &$4.861695$\\
 4              & $1$                  & $434,474$ &$4.871695$\\
 5              & $1$                  & $789,816$ &$4.881695$\\
 6              & $413,543$            &$1,702,013$&$4.891695$\\
 7              & $6,248,811$          &$5,401,971$&$4.901695$\\
 8              & $333,412,235$        &$62,126,060$&$4.911695$\\
 9              & $10^9$     & $10^9$       &$4.915721$\\
       \hline
        \end{tabular}
    \end{center}
\end{table}

\begin{table}
    \begin{center}
            \caption{Numerical study of the series $\sum_{n=1}^{\infty} \log(n+1)n^{-7/4}$ by step-forward search with $\epsilon=0.0001$}\label{tab21}
        \begin{tabular}{c|c|c|c}\hline
Step number  & Number of iterations & $n$       & $S_n$\\
                & in the step          &           &      \\
\hline\hline
 0              & N/A                 & $1,000,000$ &$2.625626$\\
 1              & $1$                  & $1,282,406$ &$2.625726$\\
 2              & $1$                  & $1,730,125$ &$2.625826$\\
 3              & $1$                  & $2,251,124$ &$2.625926$\\
 4              & $1$                  & $4,189,924$ &$2.626026$\\
 5              & $96,723$             & $9,190,084$ &$2.626126$\\
 6              & $6,975,835$          &$57,584,662$&$2.626226$\\
 7              & $10^{9}$          &$10^{10}$&$2.626263$\\
       \hline
        \end{tabular}
    \end{center}
\end{table}

It is seen from Table \ref{tab2} that by the only 9 steps we arrive at the result giving us the approximate value of the series $4.915318$. The result with two decimal places for the sum of series is $4.92$ that achieved with approximately $10^9$ terms.  Note also that in the first 5 steps, there is only a single iteration, while when we arrive closer to the end, the number of iterations within the step essentially increases. The essential grows of the number of iterations is seen in steps $6$, $7$ and $8$, while in step $9$ the number of iterations reaches the established limit of $10^9$.

For the series given by \eqref{5}, the required result is achieved by 7 steps. From Table \ref{tab21} we see that the number iterations at step $5$ is $96,723$ and the number of iterations in step $6$ is $57,584,662$. The $7$th step is final, and the resulting sum of the series is approximately $2.626263$. The result with four decimal places for this series is $2.6263$. It is achieved after summing up approximately $10^{9}$ terms.

\subsubsection{Modified step-forward search} Numerical study with modified step-forward search is provided with same $\epsilon$ and  $M=2$. This means that for series \eqref{4}  all calculation starting from step $6$ are to be provided with parameter $\epsilon^*=0.001$. Step-by-step results that correspond to steps $8$ and $9$ in step-forward method of Table \ref{tab2} are now shown in Table \ref{tab3}. For series \eqref{5}, all calculation starting from step $5$ are to be provided with parameter $\epsilon^*=0.00001$. Step-by-step results that correspond to steps $6$ and $7$ in step-forward method of Table \ref{tab21} are now shown in Table \ref{tab31}.

\begin{table}
    \begin{center}
            \caption{Numerical study of the series $\sum_{n=1}^{\infty} \log(n+1)n^{-3/2}$ by modified step-forward search with $\epsilon=0.01$ and $\epsilon^*=0.001$} \label{tab3}
        \begin{tabular}{c|c|c|c}\hline
Step number  & Number of iterations & $n$       & $S_n$\\
                & in the step          &           &      \\
\hline\hline
 25              & $1$                 & $5,401,971$ &$4.901695$\\
 26              & $1$                  & $6,307,961$ &$4.902695$\\
 27              & $1$                  & $7,449,235$ &$4.903695$\\
 28              & $1$                  & $8,912,398$ &$4.904695$\\
 29              & $1$                  & $10,827,113$ &$4.905695$\\
 30             & $1$                  & $13,394,222$ &$4.906695$\\
 31              & $1$                  &$16,937,648$&$4.907695$\\
 32              & $1$                  &$22,005,935$&$4.908695$\\
 33              & $1$                  &$29,585,579$&$4.909695$\\
 34              & $1$               &$41,590,939$&$4.910695$\\
 35              & $1$               &$62,126,060$&$4.911695$\\
 36              & $1$                &$101,277,959$&$4.912695$\\
 37              & $1$                &$189,350,834$&$4.913695$\\
 38              & $8,546,857$        &$453,021,228$&$4.914695$\\
 39              & $10^9$                &$10^9$ &$4.915721$\\
       \hline
        \end{tabular}
    \end{center}
\end{table}

\begin{table}
    \begin{center}
            \caption{Numerical study of the series $\sum_{n=1}^{\infty} \log(n+1)n^{-7/4}$ by modified step-forward search with $\epsilon=0.0001$ and $\epsilon^*=0.00001$} \label{tab31}
        \begin{tabular}{c|c|c|c}\hline
Step number  & Number of iterations & $n$       & $S_n$\\
                & in the step          &           &      \\
\hline\hline
 14              & $1$                 & $9,190,084$&$2.626126$\\
 15              & $1$                  & $10,238,361$ &$2.626136$\\
 16              & $1$                  & $11,505,615$ &$2.626146$\\
 17              & $1$                  & $13,062,296$ &$2.626156$\\
 18              & $1$                  & $15,011,116$ &$2.626166$\\
 19             & $1$                  & $17,507,220$ &$2.626176$\\
 20              & $1$                  &$20,795,233$&$2.626186$\\
 21              & $1$                  &$25,281,898$&$2.626196$\\
 22              & $1$                  &$31,690,710$&$2.626206$\\
 23              & $1$               &$41,428,077$&$2.626216$\\
 24              & $1$               &$57,584,662$&$2.626226$\\
 25              & $1$                &$88,308,941$&$2.626236$\\
 26              & $1$                &$162,735,728$&$2.626246$\\
 27              & $47,811,731$        &$482,815,421$&$2.626256$\\
 28              & $856,114,482$                &\text{over} \  $10^{10}$ &$2.626266$\\
       \hline
        \end{tabular}
    \end{center}
\end{table}

\section{Concluding remark}\label{S5}

In the present paper we suggested a new method of estimating the sum of positive convergent series. The numerical procedures based on this method shows their effectiveness for a wide class of series.
The assumption that the sequence $b_n=a_{n+1}/a_n$ is strictly increasing is quite natural. The reasonable questions are how important this assumption is, what if it is not satisfied.

Under the made assumption, the test procedure reduces to find the first value $\zeta_n$ in the sequence that less then previous one. If such value is found, then the hypothesis is rejected. Modified step-forward search improves the construction and made the search procedure quicker.

If this assumption about the sequence $b_n$ is not satisfied, then the test procedure becomes much longer, since in that case we are required a many larger number of iterations to find the first negative value of $\zeta_n$ in the sequence, and only then we reject the proposed hypothesis. The larger number of operations affects negatively on the performance and makes impossible to use the modification of the search method that is used in the case when the aforementioned assumption on the sequence $b_n$ is satisfied.

\section*{Disclosure statement}

No conflict of interests was reported by the author.
%\section{List of references}
\bibliographystyle{amsplain}

\end{document}